\newtheorem{theorem}{Theorem}[section]
\newtheorem{lemma}[theorem]{Lemma}
\newtheorem{remark}[theorem]{Remark}
\newtheorem{examples}[theorem]{Examples}
\newtheorem*{bem}{Remark}{\it}{}
\numberwithin{equation}{section}
\newcommand{\Z}{{\mathbb Z}}
\newcommand{\Q}{{\mathbb Q}}
\newcommand{\C}{{\mathbb C}}
\newcommand{\x}{{\bf x}}
\begin{document}

\title[Some congruences for Siegel theta series]
{Some congruences for Siegel theta series}  
\author[R. Schulze-Pillot]{Rainer Schulze-Pillot} 
\begin{abstract}
  We discuss an arithmetic approach to some congruence properties of
  Siegel theta series of even positive definite unimodular quadratic forms.
\end{abstract}

\maketitle

 \section{Introduction.} There has recently been some interest in 
congruence properties of Siegel modular forms. In particular, following
B\"ocherer and Nagaoka \cite{boe-thop}, Siegel modular forms $f$ with
$p$-integral Fourier coefficients have been considered
for which $\Theta(f) \equiv 0 \bmod p$ for some prime $p$. Here for a
Siegel modular form $f$ with Fourier expansion 
$$f{(Z)} = \sum a(f,T) \exp(2\pi i {\rm tr}(TZ))$$ for $Z$ in the
Siegel upper half space $${\mathfrak H}_n=\{Z \in M_n^{\rm
  sym}(\C)\mid {\rm Im}(Z)\text{ positive definite}\}$$ one puts $$\Theta(f){(Z)} =
\sum \det(T) a(f,T) \exp(2 \pi i {\rm tr}(TZ)).$$
The map $f\mapsto \Theta(f)$ is called the theta operator, and one
writes  $\Theta(f) \equiv 0 \bmod p$ if all
the coefficients $ \det(T) a(f,T)$ of $\Theta(f)$ are in the maximal
ideal $p\Z_{(p)}$ of the local ring $\Z_{(p)}$ at $p$. 

In this note we show that unimodular lattices with an automorphism of order $p$ give
a quite natural series of examples for this behavior. In particular, we recover 
the recent result of Nagaoka and Takemori from \cite{naga-leech} for
the theta series of the Leech lattice.

I thank S. Böcherer for telling me about the problem and S. Nagaoka
and S. Takemori for showing me their preprint \cite{naga-leech}.
 
 \section{Fixed space decomposition of lattices with an automorphism of prime order} 
In this section we recall and modify some results from
\cite{hambleton-riehm,quebbemann} (see also \cite[Section 4]{nebe_automs}) and fix some notations for the rest
of the article.
 \vspace{0.3cm}

Let $V$ be an $m$-dimensional $\Q$-vector space with positive definite
quadratic form $q$ and associated symmetric bilinear form
$b(x,y)=q(x+y)-q(x)-q(y)$.
For an $r$-tuple ${\bf x}=(x_1,\ldots,x_r)\in V^r$ we denote by
$G_b({\bf x})$ its Gram matrix $(b(x_i,x_j))\in M_r^{\rm
  sym}(\Q)$ with respect to $b$ and write $q(\x)=\frac{1}{2}G_b(\x)$. 
The determinant $\det(M')$ of a lattice $M'$ of rank $r$ in $V$ with $\Z$-basis
$\x=(x_1,\ldots,x_r)$ is $\det(G_b(\x))$. By $O(V,q)$ resp. $O(M,q)$
we denote the group of isometries of $V$ resp. of a lattice $M$ of
full rank $m$ on $V$
onto itself with respect to $q$. 

Let $M$ be an integral lattice  with respect to $q$ (i.e.,
$q(M)\subseteq \Z$) on $V$ with an automorphism
$\sigma \in O(M,q)$ of order $p$, where $p\ne 2$ is a prime.
We recall that the lattice $M$ is called decomposable if it is an
orthogonal sum of proper sublattices and that the decomposition of $M$
into such a sum (if it exists) is unique \cite[Satz 2]{eichler_kristall},\cite{kneser_kristall}.
 \vspace{0.3cm}

We denote by $\Pi = \langle \sigma \rangle$ the cyclic group generated by $\sigma$ and consider 
$M$ as a module over the group ring $\Lambda = \Z[\Pi]$.

We consider the $\Z$-linear embedding
 \begin{equation*}
 \iota:\: \Lambda \longrightarrow \Gamma = \Z \oplus \Z[\zeta_p]
 \end{equation*}
given by $\iota(\sigma^i) = (1,\zeta_p^i)$; it is clearly an embedding of rings, which
extends to an isomorphism 
 \begin{equation*}
 \iota:\: \Q[\Pi] \longrightarrow \Q \Gamma = \Q \oplus \Q(\zeta_p).
 \end{equation*}
We have then
 \begin{equation*}
 \iota(\sum_{i=0}^{p-1} \alpha_i\sigma^i) = (a,\sum_{i=1}^{p-1} \beta_i \zeta^i)
 \end{equation*}
with
 \begin{equation*}
 p\alpha_0 = a+{\rm Tr}_{\Q}^{\Q(\zeta_p)} (\sum_{i=1}^{p-1} \beta_i \zeta^i) 
 \end{equation*}
and $\alpha_i = \beta_i+\alpha_0$ for $1 \leq i \leq p-1$.

In particular we have $I:=p\Z\oplus(1-\zeta)\Z[\zeta_p]\subseteq
\iota(\Lambda)$. We will identify $\Lambda$ and  $\iota(\Lambda)$ in
the sequel.
 \vspace{0.3cm}

We denote by $V_0$ the fixed space of $\sigma$ in $V$ and split $V =
V_0 \perp V_1$, so that 
 \begin{equation*}
 \sum^{p-1}_{j=0} \sigma^j|_{V_1} = 0.
 \end{equation*}
The $\Q[\Pi]$-module $V_1$ can then be viewed via 
 \begin{equation*}
 \Q(\zeta_p) \cong \Q[X]/(X^{p-1}+ \cdots + 1)
 \end{equation*}
as a $\Q(\zeta_p)$-vector space.
In particular we notice
\begin{lemma}
The dimension of $V_1$ as a vector space over $\Q$ is
divisible by $p-1$.
\end{lemma}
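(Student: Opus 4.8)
The plan is to exploit the observation, already made just before the statement, that the $\Q[\Pi]$-module $V_1$ is in fact a vector space over the cyclotomic field $\Q(\zeta_p)$, and then to invoke the multiplicativity of dimensions in a tower of fields. Concretely, once we know that $V_1$ carries a $\Q(\zeta_p)$-vector space structure compatible with its $\Q$-structure, writing $d = \dim_{\Q(\zeta_p)} V_1$ we get
\[
\dim_{\Q} V_1 = [\Q(\zeta_p):\Q]\cdot \dim_{\Q(\zeta_p)} V_1 = (p-1)\, d,
\]
and the divisibility is immediate.

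So the one step that needs to be spelled out is why $V_1$ really is a $\Q(\zeta_p)$-vector space. First I would set $\sigma_1 = \sigma|_{V_1} \in \End_{\Q}(V_1)$. Since $\sigma^p = \id$ on $V$ and, by the choice of the splitting $V = V_0 \perp V_1$, we have $\sum_{j=0}^{p-1}\sigma^j|_{V_1} = 0$, the endomorphism $\sigma_1$ is annihilated by $X^{p-1}+\cdots+X+1$, the $p$-th cyclotomic polynomial, which is irreducible over $\Q$. Hence the subalgebra $\Q[\sigma_1]\subseteq \End_{\Q}(V_1)$ is a quotient of $\Q[X]/(X^{p-1}+\cdots+1)\cong \Q(\zeta_p)$; as the latter is a field, either $\Q[\sigma_1] = 0$, which forces $V_1 = 0$ and makes the claim trivial, or the natural map $\Q[X]/(X^{p-1}+\cdots+1)\to \Q[\sigma_1]$, $X\mapsto\sigma_1$, is an isomorphism. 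In the nontrivial case this makes $V_1$ a module over the field $\Q(\zeta_p)$, i.e. a $\Q(\zeta_p)$-vector space, with the $\Q$-action being the restriction of the $\Q(\zeta_p)$-action; the displayed dimension formula then applies and finishes the proof.

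I do not expect any real obstacle here: the argument is essentially the statement that $\Q(\zeta_p)$ has degree $p-1$ over $\Q$, packaged through the minimal polynomial of $\sigma_1$. The only points requiring a little care are invoking the irreducibility of $X^{p-1}+\cdots+1$ (so that $\Q[\sigma_1]$ is a field and not merely a commutative ring, which is what makes the tower formula legitimate) and disposing of the degenerate case $V_1 = 0$ separately.
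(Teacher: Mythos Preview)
Your argument is correct and is essentially the same as the paper's: the paper does not give a separate proof but simply records the lemma as an immediate consequence of the remark just before it that $V_1$ is a $\Q(\zeta_p)$-vector space, which is exactly the mechanism you spell out in detail. Your extra care about the irreducibility of the cyclotomic polynomial and the degenerate case $V_1=0$ just makes explicit what the paper leaves implicit.
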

Moreover, $(a,b) = (a,\sum_{i=1}^{p-1}
\beta_i \zeta^i) \in \Q\Gamma=\Q\Lambda=\Q[\Pi]$  acts
on $V_0$ by multiplication with $a$, on $V_1$ by multiplication with
$b$, again identifying $\Q[\Pi]=\Q\Lambda$ and $\Q[\Gamma]$ via $\iota$.

 \begin{lemma}\label{gamma_m_splitting_lemma}
The lattice $\Gamma M \subseteq V$ splits as $\Gamma M = \tilde{M_0} \oplus \tilde{M}_1$, where
$\tilde{M}_i = \pi_i(M)$ is the orthogonal projection of $M$ onto $V_i$ ($i = 0.1$). 
 \end{lemma}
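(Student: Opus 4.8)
The plan is to realise the orthogonal projections $\pi_0,\pi_1$ as the action of the two primitive idempotents of $\Gamma=\Z\oplus\Z[\zeta_p]$ on $V$, and then to read the asserted decomposition off the idempotent decomposition of the $\Gamma$-module $\Gamma M$.

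First I would set $e_0=(1,0)$ and $e_1=(0,1)$ in $\Gamma$, and record that $e_0+e_1=(1,1)=\iota(\mathrm{id})$, $e_0e_1=0$ and $e_i^2=e_i$. By the description of the $\Q\Gamma$-action on $V=V_0\perp V_1$ recalled just before the lemma — an element $(a,b)$ acts by multiplication with $a$ on $V_0$ and with $b$ on $V_1$ — the element $e_0$ acts on $V$ precisely as the orthogonal projection $\pi_0$ and $e_1$ precisely as $\pi_1$. In particular $e_iM=\pi_i(M)=\tilde{M}_i$. Since $\tilde{M}_0\subseteq V_0$ and $\tilde{M}_1\subseteq V_1$ with $V_0\cap V_1=\{0\}$, the sum $\tilde{M}_0+\tilde{M}_1$ is automatically direct and orthogonal, so the whole content of the lemma reduces to the single equality $\Gamma M=\tilde{M}_0+\tilde{M}_1$.

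For that, I would use that $N:=\Gamma M$ is a $\Gamma$-submodule of $V$ and hence $N=e_0N\oplus e_1N$, and then identify the two summands. Inside $\Gamma$ one has $e_0\Gamma=\Z e_0$ (since $(1,0)(a,b)=(a,0)$), so $e_0N=e_0\Gamma M=\Z\!\cdot\!(e_0M)=\tilde{M}_0$, the last step because $\tilde{M}_0$ is already an abelian group. Likewise $e_1\Gamma=\{0\}\oplus\Z[\zeta_p]$, so $e_1N=\Z[\zeta_p]\,\tilde{M}_1$, and it remains to check that this collapses to $\tilde{M}_1$, i.e.\ that $\tilde{M}_1$ is stable under $\Z[\zeta_p]$. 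Since $\Z[\zeta_p]$ is generated over $\Z$ by $\zeta_p$, which acts on $V_1$ as $\sigma|_{V_1}$, it suffices to see that $\tilde{M}_1$ is $\sigma$-stable; and indeed, because $V_0$ and $V_1$ are $\sigma$-invariant we have $\pi_1\sigma=\sigma\pi_1$, so $\sigma\,\tilde{M}_1=\sigma\,\pi_1(M)=\pi_1(\sigma M)=\pi_1(M)=\tilde{M}_1$ using $\sigma M=M$. This gives $e_1N=\tilde{M}_1$ and therefore $\Gamma M=\tilde{M}_0\oplus\tilde{M}_1$.

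The argument is essentially formal bookkeeping with the idempotents $e_0,e_1$ and the orthogonality of $V=V_0\perp V_1$; the only place where the hypothesis $\sigma M=M$ (as opposed to merely $M\subseteq V$) is actually used — and hence the one step I would flag as the real point, modest though it is — is the collapsing $\Z[\zeta_p]\,\tilde{M}_1=\tilde{M}_1$, which says that passing from $M$ to the larger lattice $\Gamma M$ does not enlarge the component in $V_1$ beyond $\pi_1(M)$.
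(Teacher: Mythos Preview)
Your proof is correct and follows essentially the same approach as the paper: both arguments use the orthogonal idempotents $\epsilon_0=(1,0)$, $\epsilon_1=(0,1)\in\Gamma$ to identify the ring-theoretic splitting of $\Gamma M$ with the geometric splitting $\pi_0(M)\oplus\pi_1(M)$. You are more explicit than the paper about why $e_1\Gamma M$ collapses to $e_1M$ (via $\sigma M=M$), a point the paper passes over with ``in the same way''.
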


 \begin{proof}
The splitting $\Gamma = \Z \oplus \Z[\zeta_p]$ induces corresponding splittings of $V$ and
$\Gamma M$. In view of the remarks above on the action of $\Q\Gamma$ on $V_0$ and $V_1$ we see that
the splitting of $V$ is indeed the splitting $V = V_0 \perp V_1$, we write $\Gamma M =
\tilde{M}_0 \oplus \tilde{M}_1$. For $m_0 \in \tilde{M}_0$ we can write 
$m_0 = (1,0) \cdot m$ with $m \in M$, then $m_1 = (0,1) m \in \tilde{M}_1$ gives
$m = m_0 + m_1$, so $m_0 = \pi_0(m) \in \pi_0(M)$, in the same way we see $\tilde{M}_1 \subseteq
\pi_1(M)$. Conversely, given  $m_i \in \pi_i(M)$, $m_i = \pi_i(m)$ with $m \in M$, we have 
$m_i = \epsilon_im$, where $\epsilon_0 = (1,0)$, $\epsilon_1 = (0,1)
\in \Gamma$ are the orthogonal idempotents of $\Gamma$. 
 \end{proof}
 \begin{lemma}\label{intersection_lemma}
With the notation as above and $M_i = M \cap V_i$ one has
 \begin{equation*}
 p \tilde{M}_i \subseteq M_i \subseteq \tilde{M}_i \subseteq M_i^{\#} \mbox{ for } i = 0,1,
 \end{equation*}
where $M_i^\#=\{x\in V_i\mid b(x,M_i)\subseteq \Z\}$ denotes the dual lattice of $M_i$ with respect to the
symmetric bilinear form $b$.
 \end{lemma}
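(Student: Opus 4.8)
The plan is to establish the four inclusions from the inside out, working entirely inside the ambient space $V = V_0 \perp V_1$ and using the idempotents $\epsilon_0 = (1,0)$, $\epsilon_1 = (0,1)$ of $\Gamma$ together with the relation $I = p\Z \oplus (1-\zeta)\Z[\zeta_p] \subseteq \Lambda$ noted above. First I would observe that $M_i = M \cap V_i$ is by definition a sublattice of $V_i$, and since $M$ is integral and the splitting $V = V_0 \perp V_1$ is orthogonal, $b$ restricts to an integral form on each $M_i$; hence $M_i \subseteq M_i^\#$ is immediate, giving the rightmost inclusion.

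Next, for $M_i \subseteq \tilde M_i$: if $x \in M_i = M \cap V_i$ then $x = \pi_i(x) \in \pi_i(M) = \tilde M_i$ by Lemma \ref{gamma_m_splitting_lemma}, so this is essentially definitional. The inclusion $p\tilde M_i \subseteq M_i$ is the first substantive point: given $\tilde m_i = \pi_i(m) = \epsilon_i m$ with $m \in M$, I want to show $p\epsilon_i m \in M$. For $i=0$ this is clear because $p\epsilon_0 = (p,0) = \sum_{j=0}^{p-1}\sigma^j \in \Lambda$ (under $\iota$, since $(p,0)$ has $a = p$, all $\beta_i = 0$, forcing $\alpha_0 = 1$ and $\alpha_i = 1$), so $p\epsilon_0 m = \sum_j \sigma^j m \in M$, and it lies in $V_0$ since $\sigma$ fixes it; for $i=1$, $p\epsilon_1 = p\cdot(0,1) = (0,p)$, and one checks $(0,p) \in \Lambda$ by writing $p = \sum_{j}(1-\zeta_p^j)\cdot(\text{something})$ — more cleanly, $(0,p) = p\cdot 1 - (p,0) = p - \sum_j \sigma^j \in \Lambda$, so $p\epsilon_1 m \in M \cap V_1 = M_1$. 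Finally the leftmost inclusion $\tilde M_i \subseteq M_i^\#$: for $\tilde m_i \in \tilde M_i$ and $y \in M_i \subseteq M$, one has $b(\tilde m_i, y) = b(\epsilon_i m, y) = b(m, \epsilon_i y) = b(m, y) \in \Z$ because $y \in V_i$ is already fixed by $\epsilon_i$ and $\epsilon_i$ is self-adjoint for $b$ (the $V_0 \perp V_1$ splitting is $b$-orthogonal), and $m, y \in M$ with $M$ integral.

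I expect the main obstacle to be the bookkeeping around which elements of $\Gamma = \Z \oplus \Z[\zeta_p]$ actually lie in the subring $\Lambda = \iota(\Z[\Pi])$ — in particular verifying cleanly that $p\epsilon_0$ and $p\epsilon_1$ lie in $\Lambda$, equivalently that $I \subseteq \Lambda$ as already recorded, since $p\epsilon_0$ corresponds to $(p,0) \in p\Z \oplus (1-\zeta)\Z[\zeta_p]$ (as $(1-\zeta)$ divides $p$ in $\Z[\zeta_p]$, so $(0,p)$ is also in $I$) and one must translate between the additive description of $I$ and the group-ring description. Once that identification is in hand, every inclusion reduces to a one-line computation using that $m \in M$, $M$ is integral, and $\sigma \in O(M,q)$, so no deeper structure theory is needed; the self-adjointness of the projections and the orthogonality of the fixed-space splitting do all the remaining work.
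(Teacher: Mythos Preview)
Your argument is correct and follows essentially the same route as the paper, which records the first inclusion via the single line $p\Gamma \subseteq \Lambda$ (your explicit verification that $p\epsilon_0=\sum_j\sigma^j$ and $p\epsilon_1=p-\sum_j\sigma^j$ lie in $\Lambda$ amounts to the same thing) and dismisses $\tilde M_i\subseteq M_i^{\#}$ as obvious. One cosmetic slip: you have the ``leftmost/rightmost'' labels reversed and speak of four inclusions where the chain has only three, but the mathematics is sound.
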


 \begin{proof}
$p\Gamma \subseteq \Lambda$ implies $p\tilde{M}_i \subseteq p\Gamma M \subseteq \Lambda M = M$ for
$i=0,1$, so $p\tilde{M}_i \subseteq M\cap V_i = M_i$.\\
$M_i \subseteq M = \Lambda M \subseteq \Gamma M$ implies $M_i \subseteq \tilde{M}_i$.
The inclusion $\tilde{M}_i \subseteq M_i^{\#}$ is obvious.
 \end{proof}

 \begin{lemma}\label{discriminantlemma}
With the notation as above one has $M =M_0 \perp M_1$ or $M_0$ has determinant divisible by $p$.
 \end{lemma}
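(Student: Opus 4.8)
The statement asserts that the lattice either splits orthogonally as $M_0\perp M_1$ or else ``pays'' for the failure to split by having $p\mid\det(M_0)$. The plan is to measure the obstruction to splitting by the finite abelian group $M/N$, where $N:=M_0\perp M_1$, and to identify this group with the quotient $\tilde M_0/M_0$, which by Lemma~\ref{intersection_lemma} contributes to $\det(M_0)$.

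First I would check that $N$ has finite index in $M$. Since $\tilde M_i=\pi_i(M)$ spans $V_i$ (because $M$ spans $V$) and $p\tilde M_i\subseteq M_i$ by Lemma~\ref{intersection_lemma}, each $M_i$ spans $V_i$; hence $N=M_0\oplus M_1$ has full rank $m$ in $V$, and the inclusion $M_i\subseteq M$ shows $M_i$ is integral. Moreover, for any $m\in M$ one has $m=\pi_0(m)+\pi_1(m)$ with $p\pi_i(m)\in p\tilde M_i\subseteq M_i$, so $pM\subseteq N$ and $M/N$ is an elementary abelian $p$-group; write $|M/N|=p^k$. Note that $M=M_0\perp M_1$ is precisely the case $k=0$, so I may assume $k\ge 1$ from now on.

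Next I would identify $M/N$ with $\tilde M_0/M_0$. The orthogonal projection $\pi_0$ maps $M$ onto $\tilde M_0$ and carries $N=M_0\perp M_1$ onto $M_0$ (it is the identity on $M_0\subseteq V_0$ and kills $M_1\subseteq V_1$), hence it induces a surjection $M/N\to\tilde M_0/M_0$. This map is also injective: if $\pi_0(m)\in M_0$ for some $m\in M$, then $\pi_1(m)=m-\pi_0(m)\in M\cap V_1=M_1$, so $m\in N$. Therefore $[\tilde M_0:M_0]=[M:N]=p^k$.

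Finally I would feed this into the chain $M_0\subseteq\tilde M_0\subseteq M_0^{\#}$ from Lemma~\ref{intersection_lemma}: multiplicativity of lattice indices (all three lattices having full rank in $V_0$) gives $\det(M_0)=[M_0^{\#}:M_0]=[M_0^{\#}:\tilde M_0]\,[\tilde M_0:M_0]$, using the standard identity $[M_0^{\#}:M_0]=\det(M_0)$ for an integral lattice together with positive definiteness. Hence $p^k\mid\det(M_0)$, and in particular $p\mid\det(M_0)$, as required. The argument is essentially bookkeeping with lattice indices; the one genuinely needed idea is the identification $M/N\cong\tilde M_0/M_0$ coming from injectivity of $\pi_0$ on $M/N$, and the only technical points to watch are that every sublattice in sight has full rank (so all indices are finite) and the index-discriminant formula — I do not anticipate a serious obstacle beyond these.
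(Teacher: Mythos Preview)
Your proof is correct and follows essentially the same route as the paper's: both arguments hinge on the strict containment $M_0\subsetneq\tilde M_0$ (when $M\neq M_0\perp M_1$), the fact that $[\tilde M_0:M_0]$ is a $p$-power by Lemma~\ref{intersection_lemma}, and the chain $M_0\subseteq\tilde M_0\subseteq M_0^{\#}$. Your isomorphism $M/N\cong\tilde M_0/M_0$ via $\pi_0$ is just an explicit justification of the step the paper states without proof (``if $M\neq M_0\perp M_1$ then $M_0\subsetneq\tilde M_0$''), so this is the same argument with more bookkeeping spelled out.
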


 \begin{proof}
If $M \not= M_0 \perp M_1$ one has $M_0 \subsetneqq \tilde{M}_0$, and $(\tilde{M}_0:M_0)$ is a positive
power of $p$ by $p\tilde{M}_0 \subseteq M_0 \subseteq \tilde{M}_0$. But then $\det(M_0) = (M_0^{\#}:M_0)$
is also divisible by a positive power of $p$.
 \end{proof}

 \begin{theorem}\label{lattice_theorem}
With notation as above and $m_i = \dim V_i$ ($i=0,1$) one has:\\
If $M$ is indecomposable or decomposable with no proper orthogonal summand of rank $m_0$ and
determinant prime to $p$, the $\sigma$-fixed sublattice $M_0 = M\cap V_0$ of $M$ has determinant
divisible by $p$. 
 \end{theorem}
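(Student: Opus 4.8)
The plan is to deduce the statement directly from Lemma \ref{discriminantlemma}, the whole argument being a case analysis according to whether or not $M$ splits as $M_0\perp M_1$; the substantive content has already been extracted in Lemmas \ref{gamma_m_splitting_lemma}--\ref{discriminantlemma}. We may assume $m_0\ge 1$.

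First I would record two elementary observations. Since $\sigma$ has order $p>1$ it acts nontrivially on $V$, so $M\not\subseteq V_0$; hence $M_0=M\cap V_0\subsetneqq M$. Moreover, if the orthogonal splitting $M=M_0\perp M_1$ holds, then writing an element $m\in M\setminus V_0$ as $m=m_0+m_1$ with $m_i\in V_i$ we get $0\ne m_1\in M\cap V_1=M_1$, so $M_1\ne 0$ as well. Thus, whenever $M=M_0\perp M_1$, this is an orthogonal decomposition of $M$ into two proper, nonzero sublattices (using $m_0\ge 1$ for $M_0\ne 0$ and $\sigma\ne\id$ for $M_0\ne M$).

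Now suppose $M$ is indecomposable. Then the splitting $M=M_0\perp M_1$ cannot occur, so by Lemma \ref{discriminantlemma} the determinant of $M_0$ is divisible by $p$, as claimed. Next suppose $M$ is decomposable but has no proper orthogonal summand of rank $m_0$ and determinant prime to $p$, and assume for contradiction that $p\nmid\det(M_0)$. Then Lemma \ref{discriminantlemma} (in the form "if $p\nmid\det(M_0)$ then $M=M_0\perp M_1$") forces $M=M_0\perp M_1$; but then $M_0$ is a proper orthogonal direct summand of $M$ of rank $m_0$ and with determinant prime to $p$, contradicting the hypothesis. Hence $p\mid\det(M_0)$ in this case as well.

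I do not anticipate a real obstacle. The only point needing a little care is the direction in which Lemma \ref{discriminantlemma} is invoked in the decomposable case: it is the implication "$p\nmid\det(M_0)\Rightarrow M=M_0\perp M_1$" that upgrades $M_0$ to an honest orthogonal direct summand, which is exactly what is required in order to bring the hypothesis on $M$ to bear. One should also note in passing the harmless boundary behaviour — $V_1=0$ is impossible because $\sigma\ne\id$, and the case $m_0=0$ is excluded by our assumption $m_0\ge 1$.
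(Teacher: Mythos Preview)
Your proof is correct and follows the same approach as the paper, whose entire proof reads ``Clear from the lemmas above''; you have simply made the case analysis explicit. Your remarks that one needs $m_0\ge 1$ and that both $M_0,M_1$ are nonzero when $M=M_0\perp M_1$ (so that this really is a decomposition into \emph{proper} summands) are useful clarifications of points the paper leaves implicit.
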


 \begin{proof}
Clear from the lemmas above.
 \end{proof}

 \section{Congruences of Theta series.}

 \begin{theorem}\label{theta_theorem}
Let $(V,q)$ be a positive definite  $m$-dimensional quadratic space
over $\Q$, let $M$ be a $q$-integral lattice on $V$ which is
unimodular with respect to $b$ (also called an even
unimodular lattice with respect to the symmetric bilinear form $b$).
Assume that $M$ has an  automorphism $\sigma\in O(M,q)$ of odd prime order $p$, let $V_0 \subseteq V$ be the
fixed space of $\sigma$, $m_0 = \dim V_0$, let (for $Z\in {\mathfrak H}_{m_0}$)
 \begin{eqnarray*}
 F_M(Z) &=&\sum_{{\bf x}=(x_1,\ldots,x_{m_0})\in M^{m_0}}\exp (2 \pi i {\rm
   tr}(q({\bf x})Z))\\ 
&=&\sum_{T\in M_{m_0}^{\rm sym}(\Z)} A(M,T) \exp (2 \pi i {\rm tr}(TZ)) \in M_{m/2} (Sp_{m_0}(\Z))
 \end{eqnarray*}
denote the degree $m_0$ theta series of $(M,q)$.

Write (as in \cite{boe-thop})
 \begin{equation*}
 \Theta F_M(Z) = \sum_{T} \det(T) A(M,T) \exp (2 \pi i {\rm tr}(TZ)).
 \end{equation*}
Then $\Theta F_M \equiv 0 \bmod p$ unless $M$ is decomposable (into an
orthogonal sum of sublattices) and has a proper orthogonal
summand of rank $m_0$.\\
In particular, if $m_0$ is not divisible by $8$, $\Theta F_M$ is congruent to zero modulo $p$.
\end{theorem}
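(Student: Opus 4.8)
The plan is to combine the action of $\sigma$ on the representation data behind the Fourier coefficients $A(M,T)$ with Lemma~\ref{discriminantlemma}. First I would fix a symmetric matrix $T$ and set $S_T=\{\x\in M^{m_0}\mid q(\x)=T\}$, so that $A(M,T)=\#S_T$. Since $\sigma\in O(M,q)$, the cyclic group $\Pi=\langle\sigma\rangle$ of order $p$ acts on $S_T$ diagonally by $\sigma\cdot(x_1,\dots,x_{m_0})=(\sigma x_1,\dots,\sigma x_{m_0})$. A tuple is fixed by this action precisely when each $x_i$ lies in the fixed space $V_0$, i.e.\ when $\x\in M_0^{m_0}$ with $M_0=M\cap V_0$; since $p$ is prime, every other $\Pi$-orbit has length exactly $p$. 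Hence $A(M,T)\equiv\#\{\x\in M_0^{m_0}\mid q(\x)=T\}=A(M_0,T)\pmod p$, where $A(M_0,T)$ is the $T$-th Fourier coefficient of the degree-$m_0$ theta series $F_{M_0}$ of the rank-$m_0$ lattice $M_0$ (which is a full lattice in $V_0$ by Lemma~\ref{intersection_lemma}). Multiplying by $\det(T)$, which lies in $\Z_{(p)}$ because $2^{m_0}\det(T)=\det(G_b(\x))\in\Z$ and $p$ is odd, and summing over $T$ yields the formal congruence
\begin{equation*}
\Theta F_M\equiv\Theta F_{M_0}\pmod p.
\end{equation*}

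Next I would dispose of $\Theta F_{M_0}$ using Lemma~\ref{discriminantlemma}, which leaves two cases. If $p\mid\det(M_0)$, then for every $T$ with $\det(T)\ne0$ and $A(M_0,T)\ne0$ any $\x\in M_0^{m_0}$ with $q(\x)=T$ is a $\Q$-basis of $V_0$ and spans a sublattice $L\subseteq M_0$ of finite index, so $2^{m_0}\det(T)=\det(G_b(\x))=[M_0:L]^2\det(M_0)$ is divisible by $p$; as $p$ is odd this forces $\det(T)\in p\Z_{(p)}$, hence $\det(T)A(M_0,T)\in p\Z_{(p)}$, while the terms with $\det(T)=0$ contribute nothing. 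Thus $\Theta F_{M_0}\equiv0\pmod p$, and therefore $\Theta F_M\equiv0\pmod p$. In the remaining case $M=M_0\perp M_1$, so that $M$ is decomposable with the proper orthogonal summand $M_0$ of rank $m_0$ — exactly the exception allowed in the statement — and there is nothing to prove.

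For the final ("in particular") assertion, suppose $8\nmid m_0$. If we were in the exceptional case $M=M_0\perp M_1$, then positive definiteness of $M$ together with $\det(M)=\det(M_0)\det(M_1)=1$ would force $\det(M_0)=\det(M_1)=1$, so $M_0$ would be a positive definite even unimodular lattice of rank $m_0$; but such lattices exist only in ranks divisible by $8$, a contradiction. Hence we can never be in the exceptional case, and $\Theta F_M\equiv0\pmod p$.

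The only point I expect to need care is the bookkeeping in the orbit argument: identifying the $\sigma$-fixed tuples exactly with $M_0^{m_0}$, checking that the weight $\det(T)$ is an integer times a $p$-unit so that the congruence survives multiplication by it, and confirming that the singular $T$ drop out. Once this is in place, the remaining steps are a direct application of the lemmas above and of the classical structure theorem for positive definite even unimodular lattices.
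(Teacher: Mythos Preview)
Your argument is correct and follows essentially the same route as the paper: the orbit decomposition under $\Pi$ reduces $A(M,T)\bmod p$ to representations by $M_0$, and then Lemma~\ref{discriminantlemma} (equivalently Theorem~\ref{lattice_theorem}) forces $p\mid\det(M_0)$ outside the exceptional splitting, so that $p\mid\det(G_b(\x))=2^{m_0}\det(T)$ for every nondegenerate $T$ contributing to $\Theta F_{M_0}$. Your intermediate formulation $\Theta F_M\equiv\Theta F_{M_0}\pmod p$ and your explicit treatment of the ``in particular'' clause via the rank-divisible-by-$8$ constraint on even unimodular lattices are just mild elaborations of what the paper leaves implicit.
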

\begin{proof}
If $T \in M_{m_0}^{\rm sym}(\Z)$ is a positive definite matrix with $A(M,T) \not= 0$, there are
$x_1, \ldots,x_{m_0} \in M$ linearly independent with $b(x_i,x_j) = 2t_{ij}$ ($1 \leq i,j
\leq m_0$). The set of such $m_0$-tuples $x_1,\ldots,x_{m_0}$ can be split up into 
$\Pi$-orbits, where each non-trivial orbit has length divisible by $p$. Orbits with one
element (if they exist) consist of tuples $\x=(x_1,\ldots,x_{m_0}) \in M_0^{m_0}$ with
Gram matrix $G_b(\x)=2T$, and for any tuple $\x \in M_0^{m_0}$ the
determinant of the Gram matrix $G_b(\x)$ is divisible by the determinant of $M_0$. If the exceptional 
conditions given in the theorem are not satisfied, $\det M_0$ is divisible by $p$, so the
assertion follows.
 \end{proof}

 \begin{examples}
\begin{enumerate}
\item
 The Leech lattice has automorphisms of orders $2$, $3$, $4$, $5$,
 $7$, $11$, $13$, $23$. Of these, 
the automorphisms
of order $11$ and $23$ do not act fixed point free. Since the Leech lattice is indecomposable, its
theta series $F$ of degree $m_0$ satisfies $\Theta F \equiv 0\bmod p$ for $p = 11$ and for $p = 23$. For
$p = 23$ we have $m_0 = 2$, and it is easily seen that the theta series of degree $2$ itself has
non-degenerate Fourier coefficients which are not divisible by $23$ (the automorphism group of the
binary fixed lattice $M_0$ has order not divisible by $23$). In fact,
from \cite{quebbemann} one sees that $M_0$ has Gram matrix 
\begin{equation*}
  \begin{pmatrix}
    4&1\\
1&6
  \end{pmatrix}.
\end{equation*}

For $p =
11$, inspection of the character table of the group $Co_1$ in
\cite{atlas} shows that
$m_0=4$  holds in this case. Since the order of the automorphism group
of a lattice of rank $4$ can not be divisible by $11$, it is again
clear that the theta 
series of degree $4$ has non degenerate Fourier coefficients not
divisible by $11$ and that $4$ is the largest degree in which the
theta series has this property (whereas the theta series of degree $5$
or higher 
is singular mod $p$ in the terminology of \cite{boe-kik}). 

Ozeki
\cite{ozeki} has recently computed part of the degree $4$ theta series
of the Leech lattice, he found that the Gram matrix 
\begin{equation*}
  \begin{pmatrix}
    4&2&1&0\\
2&4&1&1\\
1&1&4&2\\
0&1&2&4
  \end{pmatrix}
\end{equation*}
 of determinant
$121$ (belonging to the unique non-principal ideal for a maximal order
in the quaternion algebra ramified at $\infty$ and $11$) is represented $12599323656192000$ times by the Leech
lattice (this number is equal to $1/660$ times the order of the
automorphism group $Co_0$ of the Leech lattice). 
Since this representation number is not divisible by $11$ the
lattice generated by a set of representing vectors must be contained
in the fixed lattice $M_0$ of an automorphism of order $11$, and
since this lattice is maximal, it must be equal to $M_0$. We see that
the Gram matrix above is indeed
associated to the fixed lattice of such an automorphism, and the degree
$4$ theta series of the Leech lattice is congruent modulo $11$ to the
theta series of degree $4$ of this quaternary lattice.

\item The automorphisms of order $13$ of the Leech lattice act (see again
the character table in the atlas) fixed point free. If we put 
$M = M_0 \perp M_1$, where $M_0$ is the $E_8$-lattice and $M_1$ is the Leech lattice, we have
$m_0=8$, and the degree $8$ theta series $F$ of $M$ has Fourier coefficient $|O(E_8)|$ at the
Gram matrix $T$ of the $E_8$-lattice, so this coefficient, which is not divisible by $13$, also
appears in $\Theta F$. 

\item Let $M_1$ be any even unimodular positive definite lattice having an automorphism of order
  $p$ (where $p\ne 2$ is prime) which acts fixed point free and let
  $M_0$ be any positive definite even unimodular lattice of rank $m_0$
  whose
  automorphism group is trivial (such lattices are known to exist if
  the rank $m_0$  is at least $144$ (\cite{bannai}).
Then $m_0$ is the largest degree in which the theta series of $M=M_0\perp M_1$ is
not singular modulo $p$ and the degree $m_0$ theta series of
$M$ is not annihilated by the theta operator.
\end{enumerate}
 \end{examples}
 
 \begin{remark}
The degree $m_0$ of the theta series considered in Theorem
\ref{theta_theorem}  is the maximal degree 
for which the non-degenerate Fourier coefficients are not forced to be divisible by the prime 
$p$ (so that the theta series of degree $m_0+1$ and higher are
singular mod $p$).
 \end{remark}
 \begin{remark}
Theorem \ref{lattice_theorem} allows to give a modified version of the
above theorem also for the action of the theta operator on theta 
series of non-unimodular lattices. One has then:

\begin{quote}
$\Theta F_M \equiv 0 \bmod p$ unless $M$ is decomposable (into an
orthogonal sum of sublattices) and has a proper orthogonal
summand of rank $m_0$ and determinant prime to $p$. 
\end{quote}

Since this
formulation loses the simple criterion on the dimension $m_0$ of the
fixed space and since most of the other work on this type of problem
has been concerned with Siegel modular forms for the full modular
group and hence with even unimodular lattices, we chose to concentrate
on this case in the Theorem above.
\end{remark}

\medskip
Rainer Schulze-Pillot\\
Fachrichtung 6.1 Mathematik,
Universit\"at des Saarlandes (Geb. E2.4)\\
Postfach 151150, 66041 Saarbr\"ucken, Germany\\
email: schulzep@math.uni-sb.de
\end{document}